%
\documentclass[12pt, reqno]{amsart}
\usepackage{amsmath, amsthm, amscd, amsfonts, amssymb, graphicx, color}
\usepackage[bookmarksnumbered, colorlinks, plainpages]{hyperref}
\input{mathrsfs.sty}
\textheight 22.5truecm \textwidth 14.5truecm
\setlength{\oddsidemargin}{0.35in}\setlength{\evensidemargin}{0.35in}

\setlength{\topmargin}{-.5cm}

\newtheorem{theorem}{Theorem}[section]
\newtheorem{lemma}[theorem]{Lemma}
\newtheorem{proposition}[theorem]{Proposition}
\newtheorem{corollary}[theorem]{Corollary}
\theoremstyle{definition}
\newtheorem{definition}[theorem]{Definition}

\theoremstyle{remark}
\newtheorem{remark}[theorem]{Remark}
\numberwithin{equation}{section}

\newcommand{\A}{{\mathscr A}}
\newcommand{\B}{{\mathscr B}}
\newcommand{\E}{{\mathscr E}}
\newcommand{\F}{{\mathscr F}}

\begin{document}
\title[Conditionally positive definite kernels in Hilbert $C^*$-modules]{Conditionally positive definite kernels in Hilbert $C^*$-modules}
\author[M.S. Moslehian ]{Mohammad Sal Moslehian}
\address{Department of Pure Mathematics, Ferdowsi University of Mashhad, P. O. Box 1159, Mashhad 91775, Iran}
\email{moslehian@um.ac.ir}
\subjclass[2010]{46L08, 46L05.}
\keywords{Conditionally positive definite kernel; positive definite kernel; Hilbert $C^*$-module; Kolmogorov respresentation.}
\begin{abstract}
We investigate the notion of conditionally positive definite in the context of Hilbert $C^*$-modules and present a characterization of the conditionally positive definiteness in terms of the usual positive definiteness. We give a Kolmogorov type representation of conditionally positive definite kernels in Hilbert $C^*$-modules. As a consequence, we show that a $C^*$-metric space $(S, d)$ is $C^*$-isometric to a subset of a Hilbert $C^*$-module if and only if $K(s,t)=-d(s,t)^2$ is a conditionally positive definite kernel. We also present a characterization of the order $K'\leq K$ between conditionally positive definite kernels.
\end{abstract} \maketitle

\section{Introduction}

Let $\mathbb{B}(\mathcal{H})$ denote the $C^*$-algebra of all bounded linear operators on a complex Hilbert space $(\mathcal H, \langle\cdot,\cdot\rangle)$. According to the Gelfand--Naimark--Segal theorem, every $C^*$-algebra can be regarded as a $C^*$-subalgebra of $\mathbb{B}(\mathcal{H})$ for some Hilbert space $\mathcal{H}$. An operator $A \in \mathbb{B}(\mathcal{H})$ is called \emph{positive} if $\langle Ax, x\rangle\geq 0 $ for all $x \in \mathcal{H}$, and we then write $A\geq 0$. In the case when the dimension of $\mathcal{H}$ is finite, that is when we deal with matrices, it is custom to use the terminology `\emph{positive semi-definite}'. To unify our approach, we however use the word `positive' instead of `positive semi-definite'.

A matrix $A=[a_{ij}]$ in $\mathbb{M}_n(\A)$, the $C^*$-algebra  of $n \times n$ matrices with entries in $\A$, is positive if and only if $\sum_{i,j=1}^na_i^*a_{ij}a_j \geq0$ for all $a_1, \cdots, a_n \in\A$. It follows from \cite[Lemma IV.3.2]{TAK} that a matrix in $\mathbb{M}_n(\A)$ is positive if and only if it is a sum of matrices of the form $[c_i^*c_j]$ for some $c_1, \cdots, c_n \in \A$.

The notion of \emph{semi-inner product $C^*$-module (Hilbert $C^*$-module}, resp.) is a natural generalization of that of semi-inner produce space (Hilbert space, resp.) arising by replacing the field of scalars $\mathbb{C}$ by a $C^*$-algebra. Recall that if $(\E,\langle \cdot ,\cdot\rangle) $ is a Hilbert $C^*$-module over a $C^*$-algebra $\A$, then for every $x\in \E$ the norm on $\E$ is given by $\|x\|=\|\langle x,x\rangle\|^{\frac{1}{2}}$ and the the ``absolute-value norm'' is defined by $|x|=\langle x,x \rangle ^{\frac{1}{2}}$ as a positive element of $\A$. A map $T:\E\longrightarrow \F$ between Hilbert $C^*$-modules is adjointable if there is a map $T^*:\F\longrightarrow \E$ such that $\langle Tx,y\rangle=\langle x,T^*y \rangle$ for all $x \in\E$ and $y \in \F$. Then $T$ (and $T^*$) are bounded $A$-linear maps by the uniform boundedness theorem. The set of all adjointable maps $T:\E\longrightarrow\F$ is denoted by $\mathcal{L}(\E,\F)$. It is known that $\mathcal{L}(\E):=\mathcal{L}(\E,\E)$ endowed with the operator norm is a unital $C^*$-algebra. A Hilbert $C^*$-module $\E$ is called self-dual if for every bounded $A$-linear map $f: \E \to \A$ there exist some $x_0\in\E$ such that $f(x)=\langle x_0,x\rangle$ for all $x\in \E$, see \cite[\S 2.5]{MT}.
A \emph{$C^*$-metric} on a set $S$ with values in a $C^*$-algebra $\A$ is a map $d$ from $S\times S$ into the cone $\A_+$ of positive elements of $A$ satisfying the same axioms as those of the usual metric when we consider the usual order $\leq$ on the real space of self-adjoint elements induced by the positive cone $\A_+$. See \cite{MA} for some examples of $C^*$-metrics. By a ``$C^*$-isometry'' $V$ from a $C^*$-metric space $S$ into a Hilbert $C^*$-module $\E$ we mean one satisfying $d(s,t)=|V(s)-V(t)|$.\\
The Cauchy--Schwarz inequality (see also \cite{ABM}) for $x,y$ in a semi-inner product $C^*$-module $\E$ asserts that
\begin{eqnarray}\label{CS}
\langle x,y\rangle \langle y,x\rangle\leq \|\langle y,y\rangle \| \langle x,x\rangle .
\end{eqnarray}
The reader is referred to \cite{FRA, LAN, MT} for more information on Hilbert $C^*$-modules.

It seems that positive definite kernels are first examined in 1904 by Hilbert, and conditionally positive definite kernels by Schoenberg around 1940 in a series of papers. Schoenberg also use conditionally positive definite kernels to embed a metric space into a Hilbert space. The (Kolmogorov) representation of positive definite kernels was first established by Kolmogorov in the scalar theory. The reader may consult \cite[pages 84-85]{BCR} for a complete historical view.  The Kolmogorov decomposition of positive definite kernels in context of Hilbert $C^*$-modules was given by Murphy \cite{MUR}. This decomposition asserts that any positive definite kernel $L: S\times S \to \mathcal{L}(\E)$ for a given Hilbert $C^*$-module $\E$ is of the form $L(s,t)=V(s)^*V(t)$, where $V$ is a map from $S$ into $\mathcal{L}(\E,\F)$ for some Hilbert $C^*$-module $\F$.

In this paper, we first state the notion of conditionally (called also almost) positive definite kernel in the context of Hilbert $C^*$-modules as a generalization of that in the scalar theory (cf. \cite{BCR, DON, EL}) and investigate some of its basic properties. Our investigation relies on the construction of Kolmogorov decomposition given by Murphy \cite{MUR}. Giving a characterization of conditionally positive definite kernels in Hilbert $C^*$-modules, we show that a $C^*$-metric space $(S, d)$ is $C^*$-isometric to a subset of a Hilbert $C^*$-module if and only if $K(s,t)=-d(s,t)^2$ is a conditionally positive definite kernel. We also present a characterization of the majorization $K'\leq K$ between conditionally positive definite kernels. Among other things, we present a Cauchy--Schwarz inequality for positive definite kernels with values in Hilbert $C^*$-modules.\\
Throughout the paper, $S$ stands for a non-empty set and $\A, \B$ denote $C^*$-algebras. Hilbert $C^*$-modules are denoted by $\E, \F$. For the sake of convenience, we usually use the letter $L$ for positive definite kernels and $K$ for conditionally positive definite kernels.


\section{Conditionally positive definite kernels and Conditionally positive matrices}

We model some techniques of the scalar theory of conditionally positive definite kernels to the context of Hilbert $C^*$-modules. We start our work with the following definition.

\begin{definition}
A hermitian matrix $A=[a_{ij}]\in \mathbb{M}_n(\A)\,\, (n\geq 2)$ is called \emph{conditionally positive} if $\sum_{i,j=1}^na_i^*a_{ij}a_j \geq0$ whenever $\sum_{i=1}^na_i=0$.
\end{definition}
Evidently, conditionally positive matrices form a positive cone containing positive matrices as well as matrices of the form $[c_i+c_j^*]$ for $c_1, \cdots, c_n \in \A$. This notion is closely related to some significant classes of real functions. For example, if $f: (0,\infty) \to (0,\infty)$ is an operator concave function, then all Loewner matrices associated with $f$ are conditionally positive; see \cite{BS}.
This notion has several application in harmonic analysis, physics and probability theory; see \cite{BCR} and references therein. There are other generalizations of this notion as well; see \cite[\S 8]{FM}.

By a kernel we mean any map on $S\times S$ into a $C^*$-algebra for some set $S$. For any kernel $L: S\times S \to \A$ one can define the kernel $L^*: S\times S \to \A$ by $L^*(s,t)=L(t,s)^*$. A kernel $L$ with values in a $C^*$-algebra is called hermitian if $L^*=L$.
\begin{definition}
A hermitian map $L:S \times S \to \A$ is called a \emph{positive definite kernel} if
\begin{eqnarray}\label{pdk1}
\sum_{i, j=1}^n a_i^*L(s_i,s_j)a_j \geq 0.
\end{eqnarray}
for every positive integer $n$, every $s_1, \cdots, s_n\in S$ and every $a_1, \cdots, a_n\in\A$.
\end{definition}
If $L$ is a kernel with self-adjoint values, then \eqref{pdk1} holds if $\sum_{i, j=1}^n a_iL(s_i,s_j)a_j \geq 0$ for any self-adjoint $a_1, \cdots,  a_n\in\A$. This easily follows by utilizing the Cartesian decomposition of any $a_i$ into its real and imaginary parts.
\begin{definition}
If a hermitian kernel $K:S \times S \to \A$ satisfies
\begin{eqnarray}\label{pdk2}
\sum_{i, j=1}^n a_i^*K(s_i,s_j)a_j \geq 0.
\end{eqnarray}
for all positive integers $n\geq 2$, all $s_1, \cdots, s_n\in S$ and all $a_1, \cdots, a_n\in\A$ subject to the condition $\sum_{i=1}^na_i=0$, then it is called \emph{conditionally positive definite}.
\end{definition}
It follows from inequalities \eqref{pdk1} and \eqref{pdk2} that the conditionally positive definiteness of $K$ and the positive definiteness of $L$ are equivalent to those of the matrices $[L(s_i,s_j)]$  and $[K(s_i,s_j)]$ in $\mathbb{M}_n(\A)$ for all $s_1, \cdots, s_n\in S$, respectively.

Clearly the set of all positive definite kernels and the the set of all conditionally positive definite kernels constitute positive cones. It is easy to check that any the so-called \emph{Gram kernel} $L(s,t)=g(s)^*g(t)$ and any kernel of the form $K(s,t)=g(s)+g(t)^*$, where $g: S\to \A$ is a map, are positive definite and conditionally positive definite, respectively. It immediately follows from the definition that if $L$ is a positive definite kernel, then $L(s,s)\geq 0$ and $L(t,s)=L(s,t)^*$ for all $s,t \in S$.

The next theorem is known in the literature for scalar kernels; cf. see \cite{BCR}. It was first proved in a special case by Schoenburg \cite{SCH}. We however states its proof in the noncommutative setting of $C^*$-algebras to show that how the conditionally positive definiteness differs from the positive definiteness.

\begin{theorem}\label{th1}
Let $K: S\times S \to \A$ be a hermitian kernel and $s_0\in S$. Then $K$ is conditionally positive definite if and only if the kernel $L: S\times S \to \A$ defined by
$$L(s,t):=\frac{1}{2}\left[K(s,t)-K(s,s_0)-K(s_0,t)+K(s_0,s_0)\right]$$ is a positive definite kernel.
\end{theorem}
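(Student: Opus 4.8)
The plan is to prove both implications by direct manipulation of the defining quadratic sums, the only genuine idea being an augmentation trick in the forward direction. Before anything else I would record that $L$ is hermitian: using $K^*=K$ one computes $L(t,s)^*=\tfrac12[K(s,t)-K(s_0,t)-K(s,s_0)+K(s_0,s_0)]=L(s,t)$, so $L$ is a legitimate candidate for a positive definite kernel.

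For the implication ``$L$ positive definite $\Rightarrow$ $K$ conditionally positive definite'', which I expect to be the routine half, I would fix $s_1,\dots,s_n\in S$ and $a_1,\dots,a_n\in\A$ subject to $\sum_i a_i=0$, and expand $\sum_{i,j} a_i^*L(s_i,s_j)a_j$ using the definition of $L$. The three correction terms each carry either a factor $\sum_j a_j=0$ on the right or a factor $\sum_i a_i^*=(\sum_i a_i)^*=0$ on the left, so they all vanish, leaving $\sum_{i,j}a_i^*L(s_i,s_j)a_j=\tfrac12\sum_{i,j}a_i^*K(s_i,s_j)a_j$. Positive definiteness of $L$ then forces the right-hand side to be positive, which is exactly the conditional positivity of $K$.

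For the converse, the harder direction, I would fix $s_1,\dots,s_n$ and \emph{arbitrary} $a_1,\dots,a_n\in\A$ (with no constraint) and adjoin the base point: set $a_0:=-\sum_{i=1}^n a_i$ and regard $s_0,s_1,\dots,s_n$ together with $a_0,a_1,\dots,a_n$ as admissible data for the conditional positivity of $K$, since $\sum_{i=0}^n a_i=0$ by construction. Thus $\sum_{i,j=0}^n a_i^*K(s_i,s_j)a_j\ge 0$. The core computation is to expand this $(n+1)$-fold sum by splitting off the indices equal to $0$ and substituting $a_0=-\sum_k a_k$ and $a_0^*=-\sum_k a_k^*$; after relabeling the dummy indices, the four groups of terms reassemble precisely into $\sum_{i,j=1}^n a_i^*\bigl[K(s_i,s_j)-K(s_i,s_0)-K(s_0,s_j)+K(s_0,s_0)\bigr]a_j=2\sum_{i,j=1}^n a_i^*L(s_i,s_j)a_j$. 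Hence $\sum_{i,j}a_i^*L(s_i,s_j)a_j\ge 0$, establishing positive definiteness of $L$.

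The main obstacle is the noncommutative bookkeeping in this last expansion: unlike the scalar case one must keep every adjoint $a_i^*$ on the left and every $a_j$ on the right throughout, and substitute $a_0=-\sum_k a_k$ on both sides so that the cross terms land on the correct side and the four groups collapse exactly into $2L$. Once the dummy indices are relabeled correctly this is purely mechanical. No appeal to the Cauchy--Schwarz inequality \eqref{CS} or to the module structure is needed; the hermitian hypothesis $K^*=K$ enters only to guarantee that $L$ is itself hermitian, and the whole argument takes place at the level of positivity of matrices in $\mathbb{M}_{n+1}(\A)$.
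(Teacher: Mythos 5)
Your proposal is correct and follows essentially the same route as the paper: the converse direction kills the correction terms using $\sum_i a_i=0$, and the forward direction adjoins the base point $s_0$ with coefficient $-\sum_i a_i$ and expands the augmented $(n+1)\times(n+1)$ sum into $2\sum_{i,j}a_i^*L(s_i,s_j)a_j$. The only cosmetic difference is that the paper labels the adjoined index $n+1$ rather than $0$; your added remark that $L$ is hermitian is a harmless (and welcome) bit of extra care.
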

\begin{proof}
Let $K$ be conditionally positive definite, $s_1, \cdots, s_n \in S$ and $a_1, \cdots, a_n\in A$. Set $a_{n+1}:=-\sum_{i=1}^na_i$ and $s_{n+1}:=s_0$. Then
\begin{eqnarray*}
0 &\leq& \frac{1}{2}\sum_{i,j=1}^{n+1}a_i^*K(s_i,s_j)a_j\\
&=& \frac{1}{2}\sum_{i,j=1}^{n}a_i^*K(s_i,s_j)a_j+\frac{1}{2}\sum_{i=1}^{n}a_i^*K(s_i,s_0)a_{n+1}\\
&&\quad +\frac{1}{2}\sum_{j=1}^{n}a_{n+1}^*K(s_0,s_j)a_j+\frac{1}{2}a_{n+1}^*K(s_0,s_0)a_{n+1}\\
&=&\frac{1}{2}\sum_{i,j=1}^{n}a_i^*\left[K(s_i,s_j)-K(s_i,s_0)-K(s_0,s_j)+K(s_0,s_0)\right]a_j\\
&=& \sum_{i,j=1}^na_i^*L(s_i,s_j)a_j\,
\end{eqnarray*}
whence we conclude that $L$ is positive definite.

Conversely, let $L$ be positive definite, $n\geq 2$, $s_1, \cdots, s_n \in S$ and $a_1, \cdots, a_n\in A$ with $\sum_{i=1}^na_i=0$. Then
\begin{eqnarray*}
0 &\leq& 2 \sum_{i,j=1}^na_i^*L(s_i,s_j)a_j\\
&=&\sum_{i,j=1}^na_i^*K(s_i,s_j)a_j-\left(\sum_{i=1}^na_i^*K(s_i,s_0)\right)\left(\sum_{j=1}^na_j\right)\\
&&\quad -\left(\sum_{i=1}^na_i^*\right)\left(\sum_{j=1}^nK(s_0,s_j)a_j\right)+\left(\sum_{i=1}^na_i^*\right)K(s_0,s_0)\left(\sum_{j=1}^na_j\right)\\
&=&\sum_{i,j=1}^na_i^*K(s_i,s_j)a_j\,.
\end{eqnarray*}
Hence $K$ is conditionally positive definite.
\end{proof}
\begin{remark}
Under the notation as in Theorem \ref{th1}, the positive definite kernel $L$ is identically zero if and only if there is a function $h:S \to\A$ such that $K(s,t)=h(s)+h(t)^*$. The sufficiency is clear. To prove the necessity, it is enough to put $h(s):=K(s,s_0)-\frac{1}{2}K(s_0,s_0)$.
\end{remark}
As a consequence of Theorem \ref{th1}, we state the following assertion for matrices.
\begin{corollary}\label{corm}
A matrix $[a_{ij}]$ in $\mathbb{M}_n(\A)$ is conditionally positive if and only if the matrix $[a_{ij}-a_{im}-a_{mj}+a_{mm}]$ is positive for some $1 \leq m\leq n$.
\end{corollary}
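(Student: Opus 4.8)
The plan is to reduce the statement directly to Theorem \ref{th1} by regarding the matrix $[a_{ij}]$ as a hermitian kernel on a finite set. Concretely, I would fix $S=\{s_1,\dots,s_n\}$ and define $K(s_i,s_j):=a_{ij}$; since $[a_{ij}]$ is hermitian, $K$ is a hermitian kernel. As already observed in the paragraph following \eqref{pdk2}, the conditional positive definiteness of such a kernel is equivalent to the conditional positivity of the matrix $[a_{ij}]=[K(s_i,s_j)]$, and likewise positive definiteness of an associated kernel is equivalent to positivity of the corresponding matrix. The only point worth checking here is that allowing repeated points $t_k\in S$ in the kernel condition produces nothing new, because repeated points can be merged by summing their coefficients, which preserves the constraint $\sum_k b_k=0$; this justifies the equivalence.

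Next I would apply Theorem \ref{th1} with the choice $s_0:=s_m$ for a fixed index $m$. The kernel $L$ produced by the theorem is then
$$L(s_i,s_j)=\tfrac{1}{2}\bigl[a_{ij}-a_{im}-a_{mj}+a_{mm}\bigr],$$
and Theorem \ref{th1} asserts that $K$ is conditionally positive definite if and only if $L$ is positive definite. Translating back through the matrix--kernel dictionary, this says that $[a_{ij}]$ is conditionally positive if and only if the matrix $\tfrac{1}{2}[a_{ij}-a_{im}-a_{mj}+a_{mm}]$ is positive; since positivity is unaffected by the positive scalar factor $\tfrac{1}{2}$, the latter is equivalent to positivity of $[a_{ij}-a_{im}-a_{mj}+a_{mm}]$ itself.

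Finally I would sort out the quantifier on $m$. Because Theorem \ref{th1} holds for every choice of $s_0$, the argument above shows that $[a_{ij}]$ is conditionally positive precisely when $[a_{ij}-a_{im}-a_{mj}+a_{mm}]$ is positive for \emph{every} $m$. This immediately yields both implications of the corollary with the weaker quantifier ``for some $m$'': the forward direction gives positivity for every $m$, hence for some, while for the converse it suffices to run Theorem \ref{th1} with $s_0=s_m$ for the single index $m$ at which positivity is assumed. No genuine obstacle arises; the only care needed is the bookkeeping between the ``some'' and ``all'' quantifiers and the harmless factor $\tfrac{1}{2}$.
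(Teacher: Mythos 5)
Your proposal is correct and follows exactly the route the paper intends: the corollary is stated as an immediate consequence of Theorem \ref{th1}, obtained by viewing the matrix as a hermitian kernel on the finite set $\{s_1,\dots,s_n\}$ and taking $s_0=s_m$. Your extra care about merging repeated points, the factor $\tfrac{1}{2}$, and the ``some'' versus ``all'' quantifier only makes explicit what the paper leaves implicit.
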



\begin{corollary}\label{cor1}
A self-adjoint $2 \times 2$ block matrix $\begin{bmatrix} A & B \\ B^* & C \end{bmatrix}$ of operators in $\mathbb{B}(\mathscr{H})$ is conditionally positive if and only if $A+C\geq B+B^*$.
\end{corollary}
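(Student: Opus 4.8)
The plan is to specialize the definition of conditional positivity to the case $n=2$. Writing $A=a_{11}$, $B=a_{12}$, $B^*=a_{21}$, $C=a_{22}$, the condition for the self-adjoint matrix $\begin{bmatrix} A & B \\ B^* & C \end{bmatrix}\in\mathbb{M}_2(\mathbb{B}(\mathscr{H}))$ to be conditionally positive is that $\sum_{i,j=1}^2 a_i^* a_{ij} a_j \geq 0$ whenever $a_1+a_2=0$. The point is that the constraint $a_1+a_2=0$ leaves only a single free parameter: we may write $a_2=-a_1$ and let $a:=a_1$ range over all of $\mathbb{B}(\mathscr{H})$.

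First I would substitute $a_1=a$ and $a_2=-a$ into the quadratic form. The two cross terms then acquire a minus sign, and a direct expansion gives $a^* A a - a^* B a - a^* B^* a + a^* C a = a^*\bigl(A+C-B-B^*\bigr)a$. Consequently, the matrix is conditionally positive if and only if $a^* X a \geq 0$ for every $a\in\mathbb{B}(\mathscr{H})$, where $X:=A+C-B-B^*$ is self-adjoint.

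It then remains to record the elementary fact that, for a self-adjoint operator $X\in\mathbb{B}(\mathscr{H})$, one has $a^* X a\geq 0$ for all $a\in\mathbb{B}(\mathscr{H})$ exactly when $X\geq 0$. The forward implication is immediate on taking $a$ to be the identity operator, which is available because $\mathbb{B}(\mathscr{H})$ is unital; the reverse implication follows from writing $X=(X^{1/2})^*X^{1/2}$, so that $a^* X a=(X^{1/2}a)^*(X^{1/2}a)\geq 0$. Combining these observations, the matrix is conditionally positive precisely when $A+C-B-B^*\geq 0$, that is, when $A+C\geq B+B^*$.

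There is no genuine obstacle here; the only point meriting care is that conditional positivity furnishes the inequality solely for the constrained vectors of the form $(a,-a)$, so one should not attempt to read off full (unconstrained) positivity of the block matrix. Alternatively, the statement is a direct instance of Corollary \ref{corm} with $m=2$: the reduced matrix $[a_{ij}-a_{i2}-a_{2j}+a_{22}]$ collapses to $\mathrm{diag}\bigl(A+C-B-B^*,\,0\bigr)$, whose positivity is again equivalent to $A+C\geq B+B^*$.
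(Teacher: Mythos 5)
Your proof is correct. Your primary argument is a direct, self-contained specialization of the definition: under the constraint $a_1+a_2=0$ the quadratic form collapses to $a^*(A+C-B-B^*)a$, and positivity of a self-adjoint $X\in\mathbb{B}(\mathscr{H})$ is equivalent to $a^*Xa\geq 0$ for all $a$ (using unitality for one direction and $X=(X^{1/2})^*X^{1/2}$ for the other). The paper instead proves the corollary in one line by invoking Corollary \ref{corm} with $m=2$, which reduces the block matrix to $\operatorname{diag}(A+C-B-B^*,0)$ --- exactly the alternative you record in your closing paragraph. The two routes amount to the same computation in the end (the substitution $a_2=-a_1$ is what Corollary \ref{corm} encodes when $n=m=2$), but your direct version has the merit of not depending on Theorem \ref{th1}/Corollary \ref{corm}, while the paper's version is shorter and displays the corollary as an instance of the general translation between conditional positivity and positivity. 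One small point worth keeping explicit, which you do address: the equivalence $a^*Xa\geq0$ for all $a$ iff $X\geq0$ uses that $\mathbb{B}(\mathscr{H})$ is unital; over a non-unital $C^*$-algebra one would pass to an approximate unit instead.
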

\begin{proof}
Employing Corollary \ref{corm} with $m=2$, we see that $\begin{bmatrix} A & B \\ B^* & C \end{bmatrix}$ is conditionally positive if and only if $\begin{bmatrix} A+C-B-B^* & 0 \\ 0 & 0\end{bmatrix}$ is positive.
\end{proof}

\begin{remark}
A linear map $\Phi :\A\longrightarrow \mathcal{B}$ between $C^*$-algebras is said to be positive if $\Phi(A)\geq 0,$ whenever $A \geq 0.$ A linear map $\Phi$ is called \emph{$n$-positive} if the map $\Phi_n : M_n(\A)\longrightarrow M_n(\mathcal{B})$ defined by $\Phi_n([a_{ij}]) = [\Phi(a_{ij})]$ is positive. A map $\Phi$ is said to be \emph{completely positive} if it is $n$-positive for every $n\in \mathbb{N}$.

Given a linear map $\Phi$ between $C^*$-algebras, Corollary \ref{cor1} ensures that if $\Phi$ is positive, then the corresponding map $\Phi_2$ preserves the conditional positivity. The converse can be seen to be true by considering the conditionally positive matrix $\begin{bmatrix} A & 0 \\ 0 & 0\end{bmatrix}$. So it seems that to give a definition of completely conditionally positive map in the sense that it takes any conditionally positive matrix to a conditionally positive one needs some care. In this direction, one may say a map $\Phi: \A\ \to \B$ between $C^*$-algebras to be \emph{completely conditionally positive} if $\sum_{i,j=1}^na_i^*\Phi(b_i^*b_j)a_j \geq0$ for all $n\geq 2$, $b_1, \cdots, v_n\in \A$ and $a_1, \cdots, a_n\in \A$ with $\sum_{i=1}^na_ib_i=0$. Such maps have interesting properties in studying some types of semigroups; see \cite{BBLS}.
\end{remark}

From Corollary \ref{cor1} applied to the conditionally positive $2\times 2$ matrix $$\begin{bmatrix} K(s,s) & K(s,t) \\ K(t,s) & K(t,t) \end{bmatrix}$$ we derive that if $K$ is a conditionally positive definite kernel, then $$2{\rm Re} K(s,t) \leq K(s,s)+K(t,t)$$ for all $s, t\in S$.\\
To achieve a Cauchy--Schwarz inequality for conditionally positive definite kernels we need the following lemma.
\begin{lemma}\cite[Lemma 5.2]{LAN}\label{pos}
Let $\mathscr{H}$ be a Hilbert space and $T,P,Q \in \mathbb{B}(\mathscr{H})$ with $P\geq 0$ and $Q\geq 0$. If
$\begin{bmatrix} P & T \\T^* & Q\end{bmatrix}\geq 0$ in $\mathbb{B}(\mathscr{H}\oplus \mathscr{H})$, then $TT^*\leq \|Q\|P$.
\end{lemma}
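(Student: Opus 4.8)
The plan is to exploit the positivity of the block matrix by first replacing the lower-right corner $Q$ with a scalar multiple of the identity and then completing the square, thereby avoiding any need to invert $Q$ itself. Since $\|Q\|I - Q \geq 0$, the block operator $\mathrm{diag}(0, \|Q\|I - Q)$ is positive, and adding it to the given matrix preserves positivity; hence $\begin{bmatrix} P & T \\ T^* & \|Q\|I \end{bmatrix} \geq 0$. This reduces the problem to the special case $Q = \|Q\|I$, in which the relevant Schur complement is scalar and the computation becomes transparent.

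Next I would unwind the positivity of $\begin{bmatrix} P & T \\ T^* & \|Q\|I \end{bmatrix}$ into the scalar inequality $\langle Px, x\rangle + 2\,\mathrm{Re}\langle Ty, x\rangle + \|Q\|\,\|y\|^2 \geq 0$, valid for all $x, y \in \mathscr{H}$. The decisive step is the substitution $y := -\|Q\|^{-1} T^* x$ (legitimate once $\|Q\| > 0$): it turns the cross term into $-2\|Q\|^{-1}\|T^* x\|^2$, which is real and nonpositive, and collapses the whole expression to $\langle Px, x\rangle - \|Q\|^{-1}\|T^* x\|^2 \geq 0$. Rearranging and using $\|T^* x\|^2 = \langle TT^* x, x\rangle$ yields $\langle TT^* x, x\rangle \leq \|Q\|\,\langle Px, x\rangle$ for every $x \in \mathscr{H}$, which is precisely $TT^* \leq \|Q\|P$.

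The only situation needing separate treatment is the degenerate case $\|Q\| = 0$, i.e. $Q = 0$: here one observes that positivity of a block matrix whose diagonal corner vanishes forces the off-diagonal corner to vanish as well, so $T = 0$ and the conclusion $TT^* = 0 \leq \|Q\|P$ holds trivially. I expect the main (though modest) obstacle to be precisely this bookkeeping around the degenerate case, together with the verification that the cross term is genuinely real after the substitution — the point that makes completing the square legitimate. The preliminary reduction to $Q = \|Q\|I$ is what keeps the entire argument clean.
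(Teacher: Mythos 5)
Your proof is correct. Note first that the paper does not prove this lemma at all: it is imported verbatim as \cite[Lemma 5.2]{LAN}, so there is no in-text argument to compare against. Your self-contained proof is sound at every step: the enlargement $\mathrm{diag}(0,\|Q\|I-Q)\geq 0$ legitimately reduces to the case $Q=\|Q\|I$; the quadratic form $\langle Px,x\rangle+2\,\mathrm{Re}\langle Ty,x\rangle+\|Q\|\|y\|^2\geq 0$ is the correct unwinding of block positivity; and the substitution $y=-\|Q\|^{-1}T^*x$ does produce the real cross term $-2\|Q\|^{-1}\|T^*x\|^2$ and collapses to $\langle TT^*x,x\rangle\leq\|Q\|\langle Px,x\rangle$. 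The degenerate case $Q=0$ is also handled correctly (scaling $y$ in the quadratic form forces $T=0$). One small remark: an equivalent and slightly more symmetric route, closer to what one finds in Lance, is to derive from block positivity the Cauchy--Schwarz-type bound $|\langle T^*x,y\rangle|^2\leq\langle Px,x\rangle\langle Qy,y\rangle\leq\|Q\|\,\langle Px,x\rangle\,\|y\|^2$ and then set $y=T^*x$; this avoids the case split on $\|Q\|=0$ entirely, but your version is equally valid.
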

Now, for a positive definite kernel $L$, it follows from the positivity of the matrix
$$\begin{bmatrix} L(s,s) & L(s,t) \\ L(t,s) & L(t,t) \end{bmatrix}$$
and Lemma \ref{pos} that the following Cauchy--Schwarz inequality holds
$$L(s,t)L(t,s) \leq \|L(t,t)\| L(s,s)\,.$$
This inequality is a generalization of \cite[Theorem 1.14]{EL} to positive definite kernels with values in $C^*$-algebras. We summarize the above facts as a proposition.
\begin{proposition} [Cauchy--Schwarz inequality]Let $S$ be a set.
\begin{itemize}
\item [(i)] Let $K$ be a conditionally positive definite kernel on $S$ with values in a $C^*$-algebra. Then $2{\rm Re} K(s,t) \leq K(s,s)+K(t,t)$ for all $s, t\in S$.
\item [(ii)] Let $L$ be a positive definite kernel on $S$ with values in a $C^*$-algebra. Then  $L(s,t)L(t,s) \leq \|L(t,t)\| L(s,s)$ for all $s, t\in S$.
\end{itemize}
\end{proposition}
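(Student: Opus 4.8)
The plan is to treat the final Proposition as a bookkeeping statement that simply collects the two Cauchy--Schwarz-type inequalities already derived in the running text, so the proof is essentially a matter of pointing to the right $2\times 2$ matrix in each case and invoking a result proved earlier in the excerpt. First I would prove part (i). Fix $s,t\in S$ and consider the $2\times 2$ matrix
$$\begin{bmatrix} K(s,s) & K(s,t) \\ K(t,s) & K(t,t) \end{bmatrix}$$
with entries in the $C^*$-algebra $\A$ (realised inside some $\mathbb{B}(\mathscr H)$ via Gelfand--Naimark--Segal). Since $K$ is conditionally positive definite and the conditional positive definiteness of $K$ is equivalent to the conditional positivity of the matrices $[K(s_i,s_j)]$ (as noted after the three definitions), this matrix is conditionally positive. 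Applying Corollary \ref{cor1} with $A=K(s,s)$, $B=K(s,t)$, $C=K(t,t)$ yields precisely $K(s,s)+K(t,t)\geq K(s,t)+K(s,t)^*=2\,{\rm Re}\,K(s,t)$, which is the claim.

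For part (ii) I would argue analogously, now using that a positive definite $L$ makes the matrix
$$\begin{bmatrix} L(s,s) & L(s,t) \\ L(t,s) & L(t,t) \end{bmatrix}$$
positive in $\mathbb{M}_2(\A)\subseteq \mathbb{B}(\mathscr H\oplus\mathscr H)$. Setting $P=L(s,s)$, $Q=L(t,t)$, $T=L(s,t)$ (so that $T^*=L(t,s)=L(s,t)^*$ by hermiticity), both diagonal entries are positive because $L(s,s),L(t,t)\geq 0$, and Lemma \ref{pos} then gives $TT^*\leq\|Q\|P$, i.e. $L(s,t)L(t,s)\leq\|L(t,t)\|\,L(s,s)$.

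Both parts are short once the equivalence between kernel positivity and matrix positivity is in hand; the only genuine content is that this equivalence lets us pass from the ``for all $n$'' definitions to the single $n=2$ matrix, together with the hermiticity identities $K(t,s)=K(s,t)^*$ and $L(t,s)=L(s,t)^*$ used to read off the off-diagonal adjoint structure. I expect no real obstacle here, since the two inequalities have already been established verbatim in the paragraphs immediately preceding the Proposition; the proof is therefore a two-line assembly:
\begin{proof}
Part (i) follows by applying Corollary \ref{cor1} to the conditionally positive matrix $\begin{bmatrix} K(s,s) & K(s,t) \\ K(t,s) & K(t,t) \end{bmatrix}$, and part (ii) follows by applying Lemma \ref{pos} to the positive matrix $\begin{bmatrix} L(s,s) & L(s,t) \\ L(t,s) & L(t,t) \end{bmatrix}$, as explained above.
\end{proof}
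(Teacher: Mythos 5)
Your proposal is correct and follows exactly the paper's own argument: part (i) is obtained by applying Corollary \ref{cor1} to the conditionally positive matrix $\begin{bmatrix} K(s,s) & K(s,t) \\ K(t,s) & K(t,t) \end{bmatrix}$, and part (ii) by applying Lemma \ref{pos} to the positive matrix $\begin{bmatrix} L(s,s) & L(s,t) \\ L(t,s) & L(t,t) \end{bmatrix}$, which is precisely how the paper derives both inequalities in the paragraphs preceding the Proposition before summarizing them. No further comment is needed.
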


\begin{remark}
Let $\A$ be a unital commutative $C^*$-algebra. By the Gelfand theorem it is of the form $C(\Omega)$ for some compact Hausdorff space $\Omega$. It is known that the Schur product $A\circ B=[a_{ij}b_{ij}]$ of two positive matrices $A=[a_{ij}]$ and $B=[b_{ij}]$ is positive. Hence the product $K_1\circ K_2: S\times S\to \A$ of two positive definite kernels defined by $(K_1\circ K_2)(s,t)=K_1(s,t)K_2(s,t)$ is again positive definite. This property is, however, not true for conditionally positive definite kernels. It is enough to consider $S=\{1,2\}$ and conditionally positive matrices $A=B=\begin{bmatrix} 0 & -1 \\ -1 & 0 \end{bmatrix}$ and use Corollary \ref{cor1}.
\end{remark}


\section{Kolmogorov decomposition of conditionally positive definite kernels}

Murphy \cite{MUR} established a Kolmogorov decomposition of positive definite kernels in context of Hilbert $C^*$-modules inspired by the scalar version in \cite{EL}. Utilizing his construction, we establish the Kolmogorov decomposition of conditionally positive definite kernels in the setting of Hilbert $C^*$-modules. Such constructions can be found in some types of Steispring theorems in various settings \cite{AMY, MJJ}.

\begin{theorem}[Kolmogorov decomposition]\label{th2}
Let $K$ be a conditionally positive definite kernel on a set $S$ into the $C^*$-algebra $\mathcal{L}(\E)$ for some Hilbert $\A$-module $\E$. Then there exist a Hilbert $C^*$-module $\F$ over $\A$ and a mapping $V: S \to \mathcal{L}(\E,\F)$ such that
\begin{eqnarray}\label{v}
K(s,t)=2V(s)^*V(t)-V(s)^*V(s)-V(t)^*V(t)-h(s)-h(t)^*,
\end{eqnarray}
where $h: S \to \mathcal{L}(\A)$ is a certain map.
\end{theorem}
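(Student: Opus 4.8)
The plan is to reduce the problem to the positive definite case, for which a Kolmogorov decomposition is already available. First I would fix an arbitrary base point $s_0 \in S$ and invoke Theorem \ref{th1}: since $K$ is conditionally positive definite, the kernel
\[
L(s,t) = \frac{1}{2}\left[K(s,t) - K(s,s_0) - K(s_0,t) + K(s_0,s_0)\right]
\]
is a positive definite kernel on $S$ with values in $\mathcal{L}(\E)$. Applying Murphy's Kolmogorov decomposition (recalled in the introduction) to $L$ then produces a Hilbert $\A$-module $\F$ and a map $V \colon S \to \mathcal{L}(\E,\F)$ with $L(s,t) = V(s)^*V(t)$. This settles all the representation-theoretic content; what remains is purely algebraic.

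Next I would solve for $K$ and reorganize the terms. From the definition of $L$ we have $2V(s)^*V(t) = K(s,t) - K(s,s_0) - K(s_0,t) + K(s_0,s_0)$, and in particular
\[
V(s)^*V(s) = L(s,s) = \tfrac{1}{2}\left[K(s,s) - K(s,s_0) - K(s_0,s) + K(s_0,s_0)\right],
\]
where I use that $K$ is hermitian, so $K(s_0,s) = K(s,s_0)^*$ and $K(s,s)^* = K(s,s)$. Substituting these expressions into the right-hand side of \eqref{v} and collecting like terms, I expect every occurrence of $K(s_0,s_0)$, $K(s,s)$, $K(t,t)$, $K(s,s_0)$ and $K(s_0,t)$ to cancel, leaving exactly $K(s,t)$, provided $h$ is defined by
\[
h(s) := -\tfrac{1}{2}K(s,s) - \tfrac{1}{2}K(s,s_0) + \tfrac{1}{2}K(s_0,s).
\]
A short computation using the hermitian symmetry then gives $h(t)^* = -\tfrac{1}{2}K(t,t) - \tfrac{1}{2}K(s_0,t) + \tfrac{1}{2}K(t,s_0)$, which is precisely the $t$-dependent residue required; note that each value $h(s)$ lies in $\mathcal{L}(\E)$.

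I do not anticipate a serious obstacle, since after the two-line reduction the argument is bookkeeping. The one point needing care is the hermitian symmetry of $K$: the leftover ``first-order'' terms involving $s_0$ do not split into a function of $s$ plus the \emph{same} function of $t$, but into the asymmetric combination $h(s) + h(t)^*$, and the anti-hermitian piece $\tfrac{1}{2}\left(K(s_0,s) - K(s,s_0)\right)$ inside $h$ is essential. Checking that the ``$s$-block'' really is the adjoint-complement of the ``$t$-block'' --- equivalently, that the definition of $h(s)$ is consistent with the value of $h(t)^*$ forced by the computation --- is the only place where the hermitian hypothesis is genuinely used, and it is the step I would verify most carefully.
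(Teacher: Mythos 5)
Your proposal is correct and follows essentially the same route as the paper: reduce to a positive definite kernel $L$ via Theorem \ref{th1} with a fixed base point $s_0$, apply Murphy's Kolmogorov decomposition to write $L(s,t)=V(s)^*V(t)$, and then verify the identity \eqref{v} by direct algebra. Your choice $h(s)=-\tfrac{1}{2}K(s,s)-\tfrac{1}{2}K(s,s_0)+\tfrac{1}{2}K(s_0,s)$ coincides with the paper's $h(s)=-\tfrac{1}{2}K(s,s)-\mathrm{i}\,\mathrm{Im}\,K(s,s_0)$ by the hermitian symmetry of $K$, and the cancellation you describe checks out.
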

\begin{proof}
Let us fix $s_0 \in S$ and set
\begin{eqnarray}\label{l}
L(s,t)=\frac{1}{2}\left(K(s,t)-K(s,s_0)-K(s_0,t)+K(s_0,s_0)\right)\,.
\end{eqnarray}
Employing Theorem \ref{th1} we deuce that $L$ is a positive definite kernel. Now we use the strategy in \cite{MUR} to construct the required Hilbert $C^*$-module $\F$ and the map $V$.

For any map $f:S \to \E$ with finite support we define $\L f: S \to \E$ by
$$\L f(s):=\sum_{t\in S}L(s,t)f(t)\,.$$
Denote by $\F_0$ the semi-inner product $\A$-module of all maps $\L f$ equipped with the pointwise operations and the semi-inner product
$$\langle \L f,\L g\rangle:=\sum_{s,t\in S}\langle L(s,t)f(t),g(s)\rangle\,.$$
A standard argument based on the Cauchy--Schwarz inequality \eqref{CS} shows that $\langle \cdot,\cdot\rangle$ is indeed an inner product. We denote the completion of $\F_0$ by $\F$, which is called the \emph{reproducing kernel space}. It is easy to verify that $\mathcal{L}(\E,\F)$ is a Hilbert $C^*$-module over $\mathcal{L}(E)$ via the inner product $\langle T,S\rangle:=T^*S$. \\ Next, we set $V: S \to \mathcal{L}(\E,\F)$ by $V(s)(x)=\L(x_s)$, where $x_s: S \to \E$ is defined by
$$x_s(t)=\Big\{\begin{matrix} 0 & t\neq s \\ x & t=s\end{matrix}.$$
It is not hard to see that $V$ is well-defined. In addition, $V(s)^*V(t)=L(s,t)$, since
$$\langle V(s)^*V(t)x,y\rangle=\langle \L x_t, \L y_s\rangle=\langle L(s,t)x,y\rangle$$
for all $x, y\in \E$. It is notable that $\cup_{s\in S}V(s)E$ is dense in $\F$.
We have
\begin{eqnarray*}
&&\hspace{-1.5cm} 2V(s)^*V(t)-V(s)^*V(s)-V(t)^*V(t)\\
&=&2\Big({\rm Re}(V(s)^*V(t))+2{\rm i}\, {\rm Im}(V(s)^*V(t))\Big)-V(s)^*V(s)-V(t)^*V(t)\\
&=& 2{\rm i}\, {\rm Im}L(s,t) + 2{\rm Re}L(s,t)-L(s,s)-L(t,t)\\
&=& 2{\rm i}\, {\rm Im}L(s,t)+ {\rm Re} K(s,t)-\frac{1}{2}(K(s,s)+K(t,t))\\
&=& {\rm i}\, {\rm Im}(K(s,t)-K(s,s_0)-K(s_0,t))+ {\rm Re} K(s,t)-\frac{1}{2}(K(s,s)+K(t,t))\\
&&\qquad\qquad \qquad\qquad\qquad\qquad \qquad\qquad (\mbox{since~} K(s_0,s_0) \mbox{ is self-adjoint})\\
&=& K(s,t)+ \left(\frac{-1}{2}K(s,s)-{\rm i}\, {\rm Im}K(s,s_0)\right)+ \left(\frac{-1}{2}K(t,t)-{\rm i}\, {\rm Im}K(t,s_0)\right)^*\\
&=& K(s,t)+h(s)+h(t)^*,
\end{eqnarray*}
where
\begin{eqnarray}\label{h}
h(s):=\frac{-1}{2}K(s,s)-{\rm i}\, {\rm Im}K(s,s_0).
\end{eqnarray}
Thus
$$K(s,t)=2V(s)^*V(t)-V(s)^*V(s)-V(t)^*V(t)-h(s)-h(t)^*.$$
\end{proof}

The triple $(V,\F, h)$ (or $(V,\F)$ when $h=0$, resp.) is called the minimal Kolmogorov decomposition of the conditionally positive definite kernel $K$.

The next result is related to the positive definiteness of functions of the form $\psi(s,t):=\varphi(s-t)$, where $\varphi$ is a real function on $\mathbb{R}^d$. It is a $C^*$-version of a known result in the Euclidean space $\mathbb{R}^d$; see \cite[Theorem A]{MIC} and references therein. It can be deduced from Theorem \ref{th2} but we provide a direct proof for it.

\begin{corollary}\label{cor2}
A matrix in $\mathbb{M}_n(\A)$ with self-adjoint entries and with zero diagonal entries is conditionally positive if and only if it is a sum of matrices of the form $[-|a_i-a_j|^2]$ with $a_1, \cdots, a_n \in \A$.
\end{corollary}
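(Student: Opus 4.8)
The plan is to reduce both implications to ordinary positivity via Corollary~\ref{corm}, and then to exploit the fact recorded in the introduction that, by \cite[Lemma IV.3.2]{TAK}, every positive matrix in $\mathbb{M}_n(\A)$ is a finite sum of matrices of Gram form $[c_i^*c_j]$. Throughout write $A=[a_{ij}]$ with $a_{ij}^*=a_{ij}$ and $a_{ii}=0$, and note at the outset that each candidate building block $[-|a_i-a_j|^2]=[-(a_i-a_j)^*(a_i-a_j)]$ again has self-adjoint entries and vanishing diagonal, so both sides of the asserted equivalence indeed live in the same class.

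I would treat the ``only if'' direction first, as I expect it to go through cleanly and in full generality. Apply Corollary~\ref{corm} with $m=n$: since $a_{nn}=0$, conditional positivity of $A$ is equivalent to positivity of $P:=[a_{ij}-a_{in}-a_{nj}]$. Decomposing $P=\sum_\alpha[(c_i^\alpha)^*c_j^\alpha]$ by the Takesaki lemma, I would record the two bookkeeping identities forced by $a_{ii}=0$ and $a_{ij}=a_{ji}$, namely $P_{ii}=-2a_{in}$ and $P_{ij}=P_{ji}$. Expanding
\[
\sum_\alpha -|c_i^\alpha-c_j^\alpha|^2=-P_{ii}+P_{ij}+P_{ji}-P_{jj}
\]
and substituting these identities, the additive contributions $a_{in}$ and $a_{nj}$ telescope and one is left with exactly $2a_{ij}$. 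Rescaling each $c_i^\alpha$ by $1/\sqrt2$ (the relevant cone being invariant under scaling) then exhibits $A$ as the required finite sum of matrices $[-|a_i-a_j|^2]$.

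For the converse I would again invoke Corollary~\ref{corm}, and here I expect the genuine obstacle to appear. Since the conditionally positive matrices with self-adjoint entries and zero diagonal form a cone, it suffices to show that a \emph{single} $[-|a_i-a_j|^2]$ is conditionally positive. Fixing $m$ and putting $b_i:=a_i-a_m$, a direct expansion gives
\[
-|a_i-a_j|^2+|a_i-a_m|^2+|a_m-a_j|^2=b_i^*b_j+b_j^*b_i,
\]
so by Corollary~\ref{corm} the claim reduces to positivity of the symmetrized Gram matrix $[b_i^*b_j]+[b_j^*b_i]$. The first summand $[b_i^*b_j]$ is a Gram matrix, hence positive; the crux is the transposed summand $[b_j^*b_i]$. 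When $\A$ is commutative --- which is exactly the setting of the Euclidean prototype $\varphi(s-t)$ motivating the statement --- passing to $C(\Omega)$ by the Gelfand transform identifies $[b_j^*b_i]$ with the pointwise complex conjugate of the rank-one positive matrix $[\,\overline{b_i(\omega)}\,b_j(\omega)\,]$, which is again positive, so the sum is positive and the argument closes. In the genuinely noncommutative case this is the step I would scrutinize most carefully, since the transpose of a positive matrix over a $C^*$-algebra need not be positive; this is precisely the phenomenon that, as observed in the Schur-product remark of Section~\ref{th1}'s section, obstructs conditional positive definiteness from being preserved under Schur products. I therefore anticipate that a rigorous converse will either require commutativity of $\A$ or an extra symmetrization hypothesis on the blocks $b_i^*b_j$.
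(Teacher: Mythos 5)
Your first direction (conditionally positive $\Rightarrow$ sum of blocks $[-|a_i-a_j|^2]$) is correct and is essentially the paper's own proof: apply Corollary~\ref{corm} at a diagonal index, decompose the resulting positive matrix into Gram summands via Takesaki's lemma, and use the symmetry $P_{ij}=P_{ji}$ forced by the self-adjointness of the entries to symmetrize the cross term; your telescoping identity $P_{ij}+P_{ji}-P_{ii}-P_{jj}=2a_{ij}$ is exactly the paper's computation, which it phrases instead through $c_i=-\tfrac12 b_{ii}$. (Note that Theorem~\ref{th1} gives the equivalence in Corollary~\ref{corm} for \emph{every} choice of $m$, so fixing $m=n$ is legitimate.)

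Your suspicion about the other direction is not merely a caution to be discharged: that implication is genuinely false over a noncommutative $\A$, and your reduction to positivity of $[b_i^*b_j]+[b_j^*b_i]$ locates the failure exactly. Concretely, in $\A=\mathbb{M}_2(\mathbb{C})$ with matrix units $e_{ij}$, take $n=3$, $a_1=e_{11}$, $a_2=e_{12}$, $a_3=0$, and $x_1=e_{22}$, $x_2=e_{11}$, $x_3=-I$, so that $x_1+x_2+x_3=0$. Since $\sum_i a_ix_i=0$, the Gram part contributes nothing and one computes $\sum_{i,j}x_i^*\bigl(-|a_i-a_j|^2\bigr)x_j=x_1^*a_2^*a_1x_2+x_2^*a_1^*a_2x_1=e_{21}+e_{12}$, which has eigenvalue $-1$; equivalently, the $m=3$ transform $[b_i^*b_j+b_j^*b_i]$ with $b_1=e_{11}$, $b_2=e_{12}$ is not positive. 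So a single block $[-|a_i-a_j|^2]$ need not be conditionally positive. The paper's one-line proof of this direction relies on the identity $-|a_i-a_j|^2=2\langle a_i,a_j\rangle-|a_i|^2-|a_j|^2$, which silently replaces the true cross term $a_i^*a_j+a_j^*a_i$ by $2a_i^*a_j$ and is valid only when $a_i^*a_j=a_j^*a_i$ --- precisely the commutativity (or symmetrization) hypothesis you anticipated. Your Gelfand-transform argument does close the commutative case, where the entrywise conjugate of a rank-one positive matrix is again positive, so the corollary as stated should be read as requiring $\A$ commutative (or the blocks suitably symmetrized); your diagnosis of the transpose-of-a-Gram-matrix obstruction is exactly where the published proof breaks down.
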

\begin{proof}
If $a_1, \cdots, a_n$ are entries of a $C^*$-algebra $\A$, then the matrix $[-|a_i-a_j|^2]$ is conditionally positive in $\mathbb{M}_n(\A)$ with self-adjoint elements and with zero diagonal entries. To prove this, let us use Corollary \ref{corm} and the fact that
$$-|a_i-a_j|^2=2\langle a_i,a_j\rangle - |a_i|^2-|a_j|^2.$$
Conversely, let $A=[a_{ij}]\in \mathbb{M}_n(\A)$ be a conditionally positive matrix with self-adjoint entries and with and with zero diagonal entries. Utilizing Corollary \ref{corm} we get a positive matrix $[b_{ij}]\in \mathbb{M}_n(\A)$ and self-adjoint elements of $c_1, \cdots, c_n \in \A$ such that $a_{ij}=b_{ij}+(c_i+c_j)$. Hence $[b_{i,j}]=\sum_{k=1}^m[{d_i^k}^*d_j^k]$ for some $d_i^k \in \A\,\,(1\leq i\leq n, 1 \leq k\leq m)$. From $a_{ii}=0\,\,(1\leq i\leq n)$ we conclude that $2c_i=-b_{ii}=-\sum_{k=1}^m |d_i^k|^2$. Since all $a_{ij}$'s and $c_i$'s are self-adjoint, we infer that so are all $b_{ij}$'s. Hence $\sum_{k=1}^m{d_i^k}^*d_j^k=b_{ij}=b_{ij}^*=b_{ji}=\sum_{k=1}^m{d_j^k}^*d_i^k$. Thus
$$a_{ij}=\sum_{k=1}^m{d_i^k}^*d_j^k-\frac{1}{2}\sum_{k=1}^m |d_i^k|^2-\frac{1}{2}\sum_{k=1}^m |d_j^k|^2=-\sum_{k=1}^m \left|\frac{d_i^k}{\sqrt{2}}-\frac{d_j^k}{\sqrt{2}}\right|^2.$$
\end{proof}

The problem of embedding of spaces endowed by types of metrics into Hilbert spaces goes back to the work of Schoenberg \cite{SCH}.  We aim to provide some conditions for embedding of a $C^*$-metric space into a Hilbert $C^*$-module. The next result has apparently an intrinsic relation to Corollary \ref{cor2}.

\begin{theorem}
Let $(S, d(\cdot,\cdot))$ be a $C^*$-metric space with values in a $C^*$-algebra $\A$. Then $S$ is $C^*$-isometric to a subset of a Hilbert $C^*$-module if and only if $K(s,t)=-d(s,t)^2$ is a conditionally positive definite kernel.
\end{theorem}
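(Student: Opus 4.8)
The plan is to prove the two implications separately, using the Kolmogorov decomposition (Theorem \ref{th2}) for the ``if'' part and Corollary \ref{cor2} for the ``only if'' part; the former is the substantive direction.

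For necessity, suppose $V\colon S\to\E$ is a $C^*$-isometry into a Hilbert $\A$-module $\E$, so that $K(s,t)=-d(s,t)^2=-\langle V(s)-V(t),V(s)-V(t)\rangle$. I would fix $s_1,\dots,s_n$ and show that the section $[K(s_i,s_j)]=[-\langle V(s_i)-V(s_j),V(s_i)-V(s_j)\rangle]$ is a conditionally positive matrix. Since the Gram matrix $[\langle V(s_i),V(s_j)\rangle]$ is positive in $\mathbb{M}_n(\A)$, the Takesaki lemma quoted in the introduction lets me write it as a finite sum of matrices $[(c^k_i)^*c^k_j]$ with $c^k_i\in\A$, whence
$$[K(s_i,s_j)]=\sum_k\left[-\,|c^k_i-c^k_j|^2\right].$$
Each summand is a matrix with self-adjoint entries and zero diagonal, so it is conditionally positive by Corollary \ref{cor2}, and therefore so is the whole section. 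Hence $K$ is conditionally positive definite.

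For sufficiency, assume $K(s,t)=-d(s,t)^2$ is conditionally positive definite. Viewing $\A$ as a Hilbert module over itself so that $\A\subseteq\mathcal{L}(\A)$, I would apply Theorem \ref{th2} to obtain a Hilbert $\A$-module $\F$, a map $V\colon S\to\mathcal{L}(\A,\F)$ and $h\colon S\to\mathcal{L}(\A)$ with $K(s,t)=2V(s)^*V(t)-V(s)^*V(s)-V(t)^*V(t)-h(s)-h(t)^*$. The key observation is that $h$ vanishes: by formula \eqref{h}, $h(s)=-\tfrac12 K(s,s)-{\rm i}\,{\rm Im}\,K(s,s_0)$, and here $K(s,s)=-d(s,s)^2=0$ while $K(s,s_0)=-d(s,s_0)^2$ is self-adjoint because $d(s,s_0)\in\A_+$; thus ${\rm Im}\,K(s,s_0)=0$ and $h\equiv0$. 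Using that $K$ takes self-adjoint values to symmetrise the surviving expression gives $K(s,t)=-(V(s)-V(t))^*(V(s)-V(t))$, that is $d(s,t)^2=(V(s)-V(t))^*(V(s)-V(t))$.

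It remains to identify the ambient module. As recorded in the proof of Theorem \ref{th2}, $\mathcal{L}(\A,\F)$ is itself a Hilbert $C^*$-module under $\langle T,R\rangle=T^*R$, so for this module $|V(s)-V(t)|^2=(V(s)-V(t))^*(V(s)-V(t))=d(s,t)^2$, and since $d(s,t)\in\A_+$ we may take square roots to get $|V(s)-V(t)|=d(s,t)$. Thus $V$ is a $C^*$-isometry of $S$ onto a subset of $\mathcal{L}(\A,\F)$. The steps I expect to require the most care are the vanishing of $h$ together with this last identification---one must be sure that Theorem \ref{th2} genuinely applies to the $\A$-valued kernel $K$ regarded in $\mathcal{L}(\A)$ and that the absolute value computed in $\mathcal{L}(\A,\F)$ returns to $\A_+$---and, on the necessity side, the passage through Corollary \ref{cor2}, where the noncommutativity is delicate since the ``real part'' of a positive definite kernel need not be positive definite.
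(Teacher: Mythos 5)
Your sufficiency argument is essentially the one in the paper: fix $s_0$, pass to the positive definite kernel $L$ of Theorem \ref{th1}, run Murphy's construction to obtain $V$ with $V(s)^*V(t)=L(s,t)$, note that $h\equiv 0$ because $K$ vanishes on the diagonal and takes self-adjoint values, and conclude $|V(s)-V(t)|^2=-K(s,t)=d(s,t)^2$; your symmetrization of $2V(s)^*V(t)$ into $V(s)^*V(t)+V(t)^*V(s)$ is the same computation the paper performs directly on $L$. That half is correct.

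The necessity direction contains a genuine gap, and it is exactly the point you flag at the end without discharging it. The identity $[K(s_i,s_j)]=\sum_k\bigl[-|c_i^k-c_j^k|^2\bigr]$ is correct algebra, but the step ``each summand is conditionally positive by Corollary \ref{cor2}'' does not go through: the direction of Corollary \ref{cor2} you invoke is equivalent, via Corollary \ref{corm} (taking $m$ to be the index of a zero element), to the positivity of $[c_i^*c_j+c_j^*c_i]$, i.e.\ of a Gram matrix plus its transpose, and that is false over a noncommutative $\A$. For instance, in $\A=\mathbb{M}_2(\mathbb{C})$ with matrix units $E_{ij}$, take $c_1=E_{11}$, $c_2=E_{12}$, $c_3=0$: then $[c_i^*c_j+c_j^*c_i]_{i,j=1,2}=\begin{bmatrix} 2E_{11} & E_{12}+E_{21}\\ E_{12}+E_{21} & 2E_{22}\end{bmatrix}$, which as a $4\times 4$ scalar matrix contains $\begin{bmatrix} 0&1\\1&0\end{bmatrix}$ as a principal submatrix and is therefore not positive; consequently $\bigl[-|c_i-c_j|^2\bigr]_{i,j=1}^{3}$ is not conditionally positive. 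In fairness, the paper's own necessity proof has the identical defect: it asserts that $Q(s,t)=L(s,t)+L(t,s)=2\mathrm{Re}\,L(s,t)$ is positive definite, which is the same transposed-Gram claim. (This example does not refute the theorem itself, since $|c_1-c_2|\not\leq |c_1-c_3|+|c_3-c_2|=E_{11}+E_{22}$ --- indeed $\||c_1-c_2|\|=\sqrt{2}>1$ --- so the induced $|V(s)-V(t)|$ is not a $C^*$-metric there; but neither your argument nor the paper's ever uses the triangle inequality, which is the only hypothesis that could close the gap.) As written, this direction needs a new idea.
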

\begin{proof}
Let $V$ be a $C^*$-isometry from $S$ into a Hilbert $C^*$-module $(\E, \langle \cdot,\cdot\rangle)$ over a $C^*$-algebra $\A$. Put $L: S \times S \to \A$ by $L(s,t)=\langle V(s),V(t)\rangle$. Then $L$ is a positive definite kernel since for any $x_1, \cdots, x_n\in \E$ the Gram matrix $[\langle x_i,x_j\rangle]$ is positive; cf. \cite[Lemma 4.2]{LAN}. In addition,
\begin{eqnarray*}
K(s,t)&=&-d(s,t)^2=-|V(s)-V(t)|^2=-\langle V(s)-V(t), V(s)-V(t)\rangle\\
&=&-L(s,s)-L(t,t)+2{\rm Re}(L(s,t))=P(s,t)+Q(s,t),
\end{eqnarray*}
where $P(s,t):=-L(s,s)-L(t,t)$ is conditionally positive definite and $Q(s,t)=2{\rm Re}(L(s,t))=L(s,t)+L(t,s)$ is positive definite. Hence $K$ is clearly a conditionally positive definite kernel.

Conversely, let $K(s,t)=-d(s,t)^2$ be conditionally positive definite. Fixing $s_0 \in S$ and consider the positive definite kernel
$L(s,t):=\frac{1}{2}\big(K(s,t)-K(s,s_0)-K(s_0,t)+K(s_0,s_0)\big)$. Let us use the construction in Theorem \ref{th2} to get the Hilbert $C^*$-module $\mathcal{L}(\A,\F)$ and the map $V: S \to \mathcal{L}(\A,\F)$ (Note that in  Murphy's construction, we can start our work with a positive definite kernel with values in the set of `compact' operators $\mathcal{K}(\E)$ acting on a Hilbert $C^*$-module $\E$. In our case, we deal with the Hilbert $C^*$-module $\A$ over itself via the inner product $\langle a,b\rangle=a^*b$ and apply the fact that $\mathcal{K}(\A)$ is nothing than $\A$). Then we have
\begin{eqnarray*}
|V(s)-V(t)|^2&=&(V(s)-V(t))^*(V(s)-V(t))\\
&=&L(s,s)+L(t,t)-2{\rm Re}(L(s,t))\\
&=& -K(s,t)=d(s,t)^2.
\end{eqnarray*}
\end{proof}
Finally we study an order on the space of conditionally positive definite kernels. We say $K'\leq K$, where $K, K'$ are conditionally positive definite kernels whenever $K-K'$ is conditionally positive definite. The next theorem provides a characterization of conditionally positive definite kernels majorized by a given kernel $K$ under mild conditions. To get a suitable adjointable map, Pellonp\"a\"a \cite{PEL} considers a interesting regular condition. We say a kernel $K: S\times S \to \mathcal{L}(\E)$ is regular if the Hilbert $C^*$-module $\F$ of its minimal Kolmogorov decomposition $(V,\F)$ is self-dual. Then, by \cite[Proposition 2.5.2]{MT}, every bounded $A$-linear map on $\F$ is adjointable. In the case when $\A$ is finite dimensional (or equivalently, is a direct sum of matrix algebras $\mathbb{M}_m$), then every $\A$-module is self-dual.

\begin{theorem}\label{pro1}
Let $K, K'$ be regular conditionally positive definite kernels on a set $S$ into the $C^*$-algebra $\mathcal{L}(\E)$ for some Hilbert $C^*$-module $\E$ such that the minimal Kolmogorov decompositions $(V,\F)$ of $K$ is regular. Let there be $s_0\in S$ such that $K(s,s_0)$ and $K'(s,s_0)$ are self-adjoint for all $s\in S$ and both $K$ and $K'$ vanish on the diagonal $\{(s,s):s\in S\}$. Then $K'\leq K$ if and only if there exists a positive contraction $C\in \mathcal{L}(\F)$ such that $$K'(s,t)= 2V(s)^*C^*CV(t)-V(s)^*C^*CV(s)-V(t)^*C^*CV(t)$$
for all $s, t\in S$.
\end{theorem}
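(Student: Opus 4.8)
The plan is to transfer the problem from conditionally positive definite kernels to ordinary positive definite kernels, where a contraction-based characterization of majorization is available, and then to push the result back through the simplified Kolmogorov formula. Fix the point $s_0$ and, as in Theorem \ref{th1}, associate to $K$ and $K'$ the positive definite kernels $L$ and $L'$ defined by $L(s,t)=\frac12(K(s,t)-K(s,s_0)-K(s_0,t)+K(s_0,s_0))$ and analogously for $L'$. Since $K-K'$ is hermitian and the kernel associated with it in the sense of Theorem \ref{th1} is exactly $L-L'$, that theorem gives $K'\le K$ if and only if $L'\le L$. Moreover the two standing hypotheses, namely that $K$ and $K'$ vanish on the diagonal and that $K(s,s_0)$ and $K'(s,s_0)$ are self-adjoint, force the map $h$ of Theorem \ref{th2} to vanish for both kernels, so that $K(s,t)=2V(s)^*V(t)-V(s)^*V(s)-V(t)^*V(t)$ and, writing $(V',\F')$ for the minimal Kolmogorov decomposition of $L'$, $K'(s,t)=2V'(s)^*V'(t)-V'(s)^*V'(s)-V'(t)^*V'(t)$; equivalently $K'(s,t)=2L'(s,t)-L'(s,s)-L'(t,t)$.

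I would dispose of the converse first, since it is a direct computation using neither regularity nor self-duality. Given a positive contraction $C\in\mathcal{L}(\F)$ realizing the stated formula for $K'$, set $E:=(I-C^*C)^{1/2}\in\mathcal{L}(\F)$, which is well-defined and positive because $C^*C\le I$, and put $W(s):=EV(s)\in\mathcal{L}(\E,\F)$. Subtracting the two decompositions yields $K(s,t)-K'(s,t)=2W(s)^*W(t)-W(s)^*W(s)-W(t)^*W(t)$. For $s_1,\dots,s_n\in S$ and $a_1,\dots,a_n\in\mathcal{L}(\E)$ with $\sum_i a_i=0$, both diagonal sums collapse because $\sum_i a_i=0$, leaving $\sum_{i,j}a_i^*(K-K')(s_i,s_j)a_j=2\eta^*\eta\ge 0$, where $\eta=\sum_i W(s_i)a_i$. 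Hence $K-K'$ is conditionally positive definite, that is, $K'\le K$.

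For the forward implication I would build a contraction between the two Kolmogorov modules. Assuming $K'\le K$, hence $L'\le L$, positivity of $L-L'$ gives $\sum_{i,j}\langle x_i,L'(s_i,s_j)x_j\rangle\le\sum_{i,j}\langle x_i,L(s_i,s_j)x_j\rangle$ for all choices of $x_i\in\E$; in module terms this shows that the assignment $\sum_i V(s_i)x_i\mapsto\sum_i V'(s_i)x_i$ is contractive on the dense submodule $\F_0$ of $\F$, hence well-defined, and extends to a bounded $\A$-linear contraction $W:\F\to\F'$ with $WV(s)=V'(s)$. The main obstacle is adjointability: a bounded module map between Hilbert $C^*$-modules need not admit an adjoint, and this is precisely where regularity enters. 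Since $\F$ is self-dual, for each $\eta\in\F'$ the functional $\xi\mapsto\langle\eta,W\xi\rangle$ is represented by a unique vector of $\F$ by \cite[Proposition 2.5.2]{MT}, which defines $W^*$ and makes $W$ adjointable. Then $D:=W^*W\in\mathcal{L}(\F)$ is a positive contraction, and $C:=D^{1/2}$ satisfies $V(s)^*C^*CV(t)=V(s)^*W^*WV(t)=V'(s)^*V'(t)=L'(s,t)$. Substituting $L'(s,t)=V(s)^*C^*CV(t)$ into the simplified formula $K'(s,t)=2L'(s,t)-L'(s,s)-L'(t,t)$ produces exactly the asserted representation, which finishes the proof.
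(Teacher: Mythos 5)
Your proof is correct, and its forward direction follows the paper's route almost exactly: reduce $K'\leq K$ to $L'\leq L$ via Theorem \ref{th1}, note that the hypotheses kill the map $h$ in \eqref{v}, build the contraction $W:\F\to\F'$ with $WV(s)=V'(s)$ on the dense submodule, and invoke self-duality of $\F$ together with \cite[Proposition 2.5.2]{MT} to make $W$ adjointable. Two points of genuine divergence are worth recording. First, your converse is more elementary than the paper's: instead of returning to the reproducing-kernel module and checking $\langle \L'f,\L'f\rangle\leq\langle \L f,\L f\rangle$, you set $E=(I-C^*C)^{1/2}$, write $K-K'$ as $2W(s)^*W(t)-W(s)^*W(s)-W(t)^*W(t)$ with $W(s)=EV(s)$, and verify conditional positive definiteness directly from $\sum_i a_i=0$; this bypasses the Kolmogorov machinery entirely in that direction and uses neither regularity nor self-duality, which makes the asymmetry of the hypotheses transparent. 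Second, your choice $C=(W^*W)^{1/2}$ is actually more faithful to the statement than the paper's: the paper sets $C=W^*W$ and then writes $C^*C$ in the final formula, which would give $(W^*W)^2$ rather than $W^*W$; your normalization makes $C^*C=W^*W$ and so matches the displayed identity $L'(s,t)=V(s)^*C^*CV(t)$ exactly. One very minor remark: when you assert that the kernel associated to $K-K'$ in the sense of Theorem \ref{th1} is $L-L'$, you are implicitly using the linearity of the assignment $K\mapsto L$ for the fixed base point $s_0$; this is immediate from the formula but deserves a clause.
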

\begin{proof}
We use construction and notation in Theorem \ref{th2}. Let $(V,\F)$ and $(V',\F')$ be the minimal Kolmogorov decompositions of $K$ and $K'$, respectively. By the assumption, $K(s,s)=0$ for all $s \in S$ and there is $s_0\in S$ such that $K(s,s_0)$ is self-adjoint in $\A$ for all $s\in S$, so that ${\rm Im}K(s,s_0)=0$ for all $s\in S$. Hence $h(s)=0$ for all $s\in S$. A similar assertion holds about the function $h'$ corresponding to $K'$. Further, $K'\leq K$ if and only if $L' \leq L$ where $L$ and $L'$ are corresponding positive definite kernels to $K$ and $K'$ according to \eqref{l}, respectively.
By the Kolmogorov construction, $L' \leq L$ if and only if $\langle \L'f,\L'f\rangle \leq \langle \L f,\L f\rangle$ for all $f:S\to E$ with finite support. Now we show that this latter inequality is equivalent to $V'(s)=CV(s)$ for some positive contraction $C \in \mathcal{L}(\F)$:\\

Let $\langle \L'f,\L'f\rangle \leq \langle \L f,\L f\rangle$ for all $f:S\to E$ with finite support. Then the $\A$-linear map $W:\F_0\to \F'_0$ defined by $W(\L f)=\L'f$ is a contraction and can be extended to a contraction, denoted by the same $W$, from $\F$ to $\F'$. It follows from \cite[Proposition 2.5.2]{MT} that $W$ is self-adjoint. Thus for all $x\in \E$,
$$V'(s)(x)=\L'(x_s)=W(\L(x_s))=(WV(s))(x)\,,$$
whence
$V'(s)=WV(s)$. Thus
$$L'(s,t)=V'(s)^*V'(t)=(WV(s))^*(WV(t))=V(s)^*W^*WV(t)=V(s)^*CV(t),$$
where $C=W^*W$ and $\|C\|\leq \|W\|^2\leq 1$ since $W$ is a contraction.

Conversely, let $V'(s)=CV(s)$ for some positive contraction $C \in \mathcal{L}(\F)$. Then
\begin{eqnarray*}
\langle \L'f,\L'f\rangle&=&\sum_{s,t\in S}\langle K'(s,t)f(t),f(s)\rangle\\
&=&\sum_{s,t\in S}\langle V'(s)^*V'(t)f(t),f(s)\rangle\\
&=&\sum_{s,t\in S}\langle V(s)^*C^*CV(t)f(t),f(s)\rangle\\
&=&\langle C\sum_{t\in S}V(t)f(t),C\sum_{s\in S}V(s)f(s)\rangle\\
&=&\left\|C\sum_{s\in S}V(s)f(s)\right\|^2\\
&\leq& \left\|\sum_{s\in S}V(s)f(s)\right\|^2\\
&=&\sum_{s,t\in S}\langle K(s,t)f(t),f(s)\rangle\\
&=& \langle \L f,\L f\rangle
\end{eqnarray*}

Finally, by employing \eqref{v}, we observe that $K' \leq K$ if and only if
\begin{eqnarray*}
K'(s,t)&=&2V'(s)^*V'(t)-V'(s)^*V'(s)-V'(t)^*V'(t)\\
&=& 2V(s)^*C^*CV(t)-V(s)^*C^*CV(s)-V(t)^*C^*CV(t)\,.
\end{eqnarray*}
\end{proof}

\textbf{Acknowledgements.} The author would like to thank the referee for several useful comments improving the paper.

\end{document}